
\documentclass[eqno,a4paper,11pt]{article}
\title{Remark on global existence of solutions to the 1D compressible Euler equation with  time-dependent damping}
 \author{Yuusuke Sugiyama\footnote{e-mail:sugiyama.y@e.usp.ac.jp}\ \ \\ 
School of Engineering, The University of Shiga Prefecture\\
 2500, Hassaka-cho, Hikone-City, Shiga 522-8533 Japan}
\date{}
\pagestyle{plain}

\setlength{\voffset}{0cm}
\setlength{\topmargin}{0cm}
\setlength{\headheight}{0cm}
\setlength{\headsep}{1cm}
\setlength{\textheight}{22.2cm}

\setlength{\hoffset}{0.5cm}
\setlength{\oddsidemargin}{0cm}
\setlength{\textwidth}{15cm}

\usepackage{amssymb,amsmath,amsthm,mathrsfs,delarray,enumerate}
\usepackage{graphics}

\theoremstyle{definition} 
\newtheorem{Def}{Deffinition}[section]
\newtheorem{Prop}[Def]{Proposition}
\newtheorem{Lemma}[Def]{Lemma}
\newtheorem{Thm}[Def]{Theorem}
\newtheorem{Remark}[Def]{Remark}


\newcommand{\R}{\mathbb{R}} 

\makeatletter

\@addtoreset{equation}{section}

\arraycolsep=0.14cm

\begin{document}
\maketitle 
\begin{abstract}
In this paper, we consider the 1D  compressible Euler equation with  the damping coefficient $\lambda/(1+t)^{\mu}$.
Under the assumption that $0\leq \mu <1$ and $\lambda >0$ or $\mu=1$ and $\lambda > 2$,
we prove that solutions exist globally in time, if  initial data are small $C^1$ perturbation near constant states.
In particular, we remove the conditions on the limit $\lim_{|x| \rightarrow  \infty} (u (0,x), v (0,x))$, assumed in previous results.
\end{abstract}

\section{Introduction}
In this paper, we consider the following Cauchy problem of the compressible Euler equation with time-dependent  damping
\begin{eqnarray} 
\left\{  \begin{array}{ll} \label{de0}
  u_t - v_x  =0,\\
v_t + p(u)_x = -\dfrac{\lambda}{(1+t)^\mu} v , \\   
   (u(0,x), v(0,x))=(1+\varepsilon \phi (x), \varepsilon \psi (x)).
\end{array} \right.  
\end{eqnarray} 
Here $x \in \R$ is the Lagrangian spatial variable and $t \in \R_+$ is time. $u=u(t,x)$ and $v=v(t,x)$  are the real valued unknown functions, which stand for the specific volume and the fluid  velocity. $\varepsilon $ is a small positive constant.
Throughout this paper, we assume that $\lambda \geq 0$ and $\mu \geq0$.
In the case with $ \lambda \equiv 0$, the equations in \eqref{de0} are the compressible Euler, which is a fundamental model for the compressible inviscid fluids.
In the case with $\mu=0$ and $\lambda >0$ (constant damping), this system describes the flow of fluids in  porous media.
We assume that the flow is barotropic ideal gases. Namely the pressure $p$ satisfies that 
\begin{eqnarray}\label{poly}
p(u)=\frac{u^{-\gamma}}{\gamma} \ \  \mbox{for} \  \gamma > 1. 
\end{eqnarray}
For initial data, in order to avoid the singularity of $p'$, we assume that there exists  constant $\delta_0 >0$  such that for all $x \in \R$
\begin{eqnarray} \label{nosin}
u (0, x) \geq \delta_0.
\end{eqnarray}
The local existence and the uniqueness theorem for \eqref{de0}  with $C^1 _b$ initial data  is proved by   Douglis \cite{D} and Hartman and Winter \cite{HW} (see also Majda \cite{m} and Courant and Lax \cite{CL}) as follows.
\begin{Prop}\label{local}
If $u(0,\cdot), v(0,\cdot) \in C^1 _b (\R)$ and \eqref{nosin} is assumed for some $\delta_0 >0$, then \eqref{de0} has a local and unique solution $(u,v)$ satisfying
$$
(u,v) \in C^1 _b ([0,T]\times \R) \times C^1 _b ([0,T]\times \R) 
$$
and
$$u(t,x) \geq \delta_0 /2 \ \mbox{on} \ [0,T]\times \R$$
for some $T=T(\|u(0,\cdot) \|_{C^1 _b (\R)}, \|v(0,\cdot) \|_{C^1 _b (\R)}, \delta_0)$.
\end{Prop}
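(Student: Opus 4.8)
The plan is to use the classical method of characteristics for a strictly hyperbolic $2\times2$ system in one space variable (Douglis \cite{D}, Hartman--Wintner \cite{HW}; see also \cite{m,CL}); a point worth keeping in mind is that the damping coefficient $\lambda/(1+t)^{\mu}$ is smooth in $t$ and bounded by $\lambda$ on $[0,\infty)$, so it enters the local theory only as a harmless bounded zeroth-order term. First I would diagonalize. Since $p'(u)=-u^{-\gamma-1}<0$, putting $c(u):=u^{-(\gamma+1)/2}=\sqrt{-p'(u)}$, the system \eqref{de0} is strictly hyperbolic with characteristic speeds $\pm c(u)$ as long as $u$ ranges over a compact subset of $(0,\infty)$. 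With $h(u):=\int_1^u c(\xi)\,d\xi$ (smooth and strictly increasing) and the Riemann invariants $z_{\pm}:=v\pm h(u)$, a short computation rewrites \eqref{de0} as
\[
\partial_t z_+ - c(u)\,\partial_x z_+ = -\frac{\lambda}{2(1+t)^{\mu}}(z_++z_-),\qquad
\partial_t z_- + c(u)\,\partial_x z_- = -\frac{\lambda}{2(1+t)^{\mu}}(z_++z_-),
\]
where $v=(z_++z_-)/2$ and $u=h^{-1}\big((z_+-z_-)/2\big)$, the map $h^{-1}$ being smooth as long as $u$ stays in a compact subset of $(0,\infty)$. By \eqref{nosin} and $u(0,\cdot),v(0,\cdot)\in C^1_b(\R)$, the data satisfy $z_{\pm}(0,\cdot)\in C^1_b(\R)$ and $\delta_0\le u(0,\cdot)\le M$ with $M:=\sup_{x\in\R}u(0,x)$.

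Next I would set up a Picard iteration. Fix the target set $\{\delta_0/2\le u\le 2M\}$ and a constant $N$ (to be chosen from the data) as a bound for the $C^1_b$ norms. Starting from $z^{(0)}_{\pm}(t,x):=z_{\pm}(0,x)$, given $z^{(n)}_{\pm}$ I set $u^{(n)}:=h^{-1}\big((z^{(n)}_+-z^{(n)}_-)/2\big)$, solve the characteristic ODEs $\tfrac{d}{dt}X^{(n)}_{\pm}(t;y)=\mp c\big(u^{(n)}(t,X^{(n)}_{\pm})\big)$ with $X^{(n)}_{\pm}(0;y)=y$, and define $z^{(n+1)}_{\pm}$ by integrating the transport equations above along $X^{(n)}_{\pm}$. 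Differentiating the transport equations in $x$, the quantities $w^{(n+1)}_{\pm}:=\partial_x z^{(n+1)}_{\pm}$ satisfy transport equations along $X^{(n)}_{\pm}$ whose right-hand sides are bounded by $C\big(1+\|w^{(n)}\|_{C^0}\big)\big(1+\|w^{(n+1)}\|_{C^0}\big)$, since $\partial_x u$ is a linear combination of $w_+$ and $w_-$ with bounded coefficients. Comparing $y(t):=\sup_{\pm}\big(\|z^{(n+1)}_{\pm}(t)\|_{C^0}+\|w^{(n+1)}_{\pm}(t)\|_{C^0}\big)$ with the Riccati ODE $y'=C(1+y)^2$ then yields a time $T>0$, depending only on $\|u(0,\cdot)\|_{C^1_b}$, $\|v(0,\cdot)\|_{C^1_b}$ and $\delta_0$, such that on $[0,T]$ every iterate is $C^1_b$-bounded by $N$ and satisfies $\delta_0/2\le u^{(n)}\le 2M$; the last inclusion is automatic for small $T$ because the time-derivatives of the iterates are bounded by $C(N)$, so $u^{(n)}(t,\cdot)$ stays within $\delta_0/2$ of $u(0,\cdot)$. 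This Riccati-type bound is the heart of the argument and is precisely what forces $T$ to depend on the full $C^1_b$ norms rather than on the $C^0$ norms alone.

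It then remains to pass to the limit. The successive differences $z^{(n+1)}_{\pm}-z^{(n)}_{\pm}$ solve linear transport equations along $X^{(n)}_{\pm}$ forced by $z^{(n)}_{\pm}-z^{(n-1)}_{\pm}$, by $c(u^{(n)})-c(u^{(n-1)})$, and by $X^{(n)}_{\pm}-X^{(n-1)}_{\pm}$ (the last controlled by Gronwall applied to the characteristic ODEs, using the uniform $C^1$ bound); integrating along characteristics and shrinking $T$ if necessary gives $\|z^{(n+1)}_{\pm}-z^{(n)}_{\pm}\|_{C^0([0,T]\times\R)}\le\tfrac12\|z^{(n)}_{\pm}-z^{(n-1)}_{\pm}\|_{C^0([0,T]\times\R)}$. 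Hence $(z^{(n)}_{\pm})$ converges in $C^0$, and combined with the uniform $C^1_b$ bound — via the same contraction applied to $w^{(n)}_{\pm}$, or via Arzel\`a--Ascoli together with uniqueness — this produces a limit $(z_+,z_-)\in C^1_b([0,T]\times\R)$ solving the transport system, and reversing the change of variables gives the desired $C^1_b$ solution $(u,v)$ of \eqref{de0} with $u\ge\delta_0/2$ by construction.

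Finally, uniqueness will follow from the same type of estimate: if $(u,v)$ and $(\bar u,\bar v)$ are two such solutions on a common interval, their difference in Riemann-invariant form satisfies linear transport equations along the respective characteristic families with forcing controlled by the $C^1_b$ norms of both solutions, and Gronwall forces the difference to vanish. Thus the only genuinely delicate step is the quadratic-in-$(\partial_x z_+,\partial_x z_-)$ nonlinearity produced by differentiating the transport equations, which is exactly the classical obstruction in the $C^1$ theory of quasilinear hyperbolic systems; the time-dependent damping itself is completely benign locally, contributing only the uniformly bounded linear term $-\lambda(z_++z_-)/\big(2(1+t)^{\mu}\big)$ above.
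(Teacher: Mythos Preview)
Your sketch is correct and is the classical characteristics/Picard-iteration argument for strictly hyperbolic $2\times2$ systems. The paper does not give its own proof of this proposition---it merely cites \cite{D,HW,m,CL} and remarks that one patches their trapezoid-domain solutions to cover $[0,T]\times\R$---so your argument is essentially what those references contain and there is nothing further to compare.
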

In \cite{D, HW}, the local existence and uniqueness theorem is proved in a trapezoid domain. Joining the solutions, one can construct  the unique solution on $[0,T]\times \R$ as pointed out in Majda \cite{m}.
We note that the assumption \eqref{nosin} is satisfied if $\varepsilon $ is sufficiently small for fixed $\phi \in C^1 _b (\R)$.
Before recalling known results, we give notations. We set $c=\sqrt{-p'(u)}$ and $\eta= \int_{u} ^{\infty} c(\xi) d\xi=\frac{2}{\gamma-1} u^{-(\gamma-1)/2}$
and define Riemann invariants as follows:
\begin{eqnarray} \label{ri}
\begin{array}{ll}
r = v-\eta + \frac{2}{\gamma-1}, \\
s = v+\eta - \frac{2}{\gamma-1}.
\end{array}
\end{eqnarray}
For the solution constructed by Proposition \ref{local}, we define its lifespan  by
\begin{align} \label{t*}
T^* =& \sup\{T>0 \ | \   \sup_{t \in [0,T )} \| (u,v)(t)\|_{L^{\infty}} +  \| (u_t,v_t)(t)\|_{L^{\infty}} \\
&+\| (u_x,v_x)(t)\|_{L^{\infty}} +  \| p'(u(t))\|_{L^{\infty}} < \infty \}. \notag
\end{align}
We note that the boundedness of $p'(u)$ means the positivity of $u$.
From the definition of $T^*$, one may consider that there are several types of the breakdown of solutions such as  the divergence of $u$, $v$, $p'$ and time or space derivative of the solution.
However the divergence of  $u$, $v$ and $p'$ does not occur in the small data regime, since we can show that solutions are small if initial data are small (see Lemma \ref{esP}).   
Hence, for the classical solution of \eqref{de0} with small data, "blow-up'' means always the divergence of the time or space derivative of the solution.

In the case with $\lambda \equiv 0$ (no damping case),  for more general $2\times 2$ strictly hyperbolic system including the 1D Euler equation, sufficient conditions for the blow-up (the formation singularity) has been studied by many mathematicians
(e.g. Lax \cite{lax}, Zabusky \cite{z} and Klainerman and Majda \cite{km}).  Applying these blow-up results, we can show that  if $r_x$ or $s_x$ is negative at some point, then the derivative blow-up occurs under suitable assumptions on $r(0,x)$ and $s(0,x)$.
 Namely there are blow-up solutions even if initial data are small perturbations near constant states. While if $r_x (0,x), s_x (0,x) \geq 0$ and $\varepsilon $ is suitably small, 
 the global solution exists (see Remark \ref{decr}).
 
 In the case with $\mu=0$ and $\lambda >0$ (constant damping case), Hsiao and  Liu \cite{HL} has proved that  classical solutions exist globally in time,  if initial data are small perturbations near constant states and that  small solutions asymptotically behave like
that to the following porous media system as $t\rightarrow \infty$:
\begin{eqnarray*} 
\left\{  \begin{array}{ll} 
 \bar{u}_t = -p(\bar{u})_{xx},\\
 \bar{v}=-p(\bar{u})_x .
\end{array} \right.  
\end{eqnarray*} 
After Hsiao and  Liu's work, many improvements and generalizations of this work have been investigated (see Hsiao and  Liu \cite{HL2}, Nishihara \cite{NK}, Hsiao and Serre \cite{HS}, 
Marcati  and Nishihara \cite{MN} and Mei \cite{Mei}).  
We note that, in the above papers  for the 1D Euler equation with constant damping, they assume the existence of the limit $\lim_{x \rightarrow \pm \infty} (u_0 (x), v_0 (x))$ and that the convergence rate of the limit
is sufficiently fast in order to show the global existence of solutions via $L^2$ energy estimates.

In  \cite{XP1, XP2, XP3}, Pan has found thresholds of $\mu$ and $\lambda$ separating the existence and the nonexistence of global solution of \eqref{de0} in small data regime.
Namely, in the case with $0\leq \mu <1$ and $\lambda >0$ or $\mu=1$ and $\lambda >2$,  Pan \cite{XP2} has proved  that solutions  of \eqref{de0} exist globally in time,  if initial data are small and compact perturbations of  constant states. While, in the case with $\mu >1$ and $\lambda >0$ or $\mu=1$ and $0 \leq \lambda \leq 2$, Pan  has proved that solutions of \eqref{de0} can blow up under some conditions on initial data in \cite{XP1, XP3}. 
In \cite{CYZZ}, Cui, Yin, Zhang and Zhu have proved that the global solution of \eqref{de0} with $0\leq \mu<1$ and $\lambda >0$ asymptotically behaves like that to the corresponding  porous media system having a time-dependent coefficient, if $\varepsilon>0$ is small.  In \cite{XP1, XP2, XP3, CYZZ}, they also assume that the convergence rate of the limit  $\lim_{x \rightarrow \pm \infty} (u_0 (x), v_0 (x))$ is sufficiently fast, since their method is based on the $L^2$ energy method. 
In \cite{s5}, the author has  shown the global existence with small $\varepsilon >0$, assuming the existence of the one side limit $\lim_{x \rightarrow - \infty} (u_0 (x), v_0 (x))$.
In this paper, we show the global existence without any assumption on the behavior of initial data at spacial infinity, only assuming that initial data are small perturbation near constant states.
The difference between the proofs of the main theorem of this paper and \cite{s5} is  the definition of the Riemann invariant.
The Riemann invariant used in \cite{s5} can not be small, when $(u,v)$ is close to $(0,1)$. 
The proof of the main theorem of this paper is somewhat simpler than that in \cite{s5}.

Now we state the main theorem of this paper.
\begin{Thm}\label{maing}
Let   $\gamma >1$, $( \phi,  \psi) \in C^1 _b (\R)$. Suppose that $0\leq \mu <1$ and $\lambda >0$ or $\mu=1$ and $\lambda >2$. There exists a number $\varepsilon _0 >0$ such that if $0< \varepsilon \leq \varepsilon_0 $, then the Cauchy problem \eqref{de0} has a global unique solution satisfying the following decay estimate:
\begin{eqnarray} \label{dec}
\|(r_t (t),s_t (t))\|_{L^\infty} + \|(r_x (t),s_x (t))\|_{L^\infty} \leq C\varepsilon (1+t)^{-\mu}.
\end{eqnarray}
\end{Thm}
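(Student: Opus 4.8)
The plan is to rewrite \eqref{de0} in diagonalized form using the Riemann invariants $r,s$ from \eqref{ri}, derive transport equations along the two characteristic families for the derivatives $y := r_x$ and $z := s_x$ (equivalently $r_t, s_t$, which are linear combinations of $y,z$ with coefficients controlled by $c(u)$ and the damping term), and then close a pointwise a priori estimate for $(1+t)^\mu(|y|+|z|)$ by a continuity/bootstrap argument. First I would record the standard characteristic ODEs: along $\frac{dx}{dt}= c$ (the $s$-characteristic) one gets $\partial_+ s = -\frac{\lambda}{(1+t)^\mu} v$ and a Riccati-type equation $\partial_+ z = a(u)\, z^2 + b(t)\, z + (\text{lower order in } y)$, and symmetrically along $\frac{dx}{dt} = -c$ for $r$ and $y$; here the crucial sign is that the quadratic coefficient and the forcing interact with the damping term $\frac{\lambda}{(1+t)^\mu}$, which provides a linear damping of strength $\sim \frac{\lambda}{2(1+t)^\mu}$ on $z$ (resp. $y$) after accounting for the variation of $c(u)$. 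Because $u$ stays close to $1$ (Lemma \ref{esP}), $c(u)$ and its derivatives are bounded above and below, so all these coefficients are comparable to constants times powers of $(1+t)$.

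Next I would set up the weighted quantities. The natural ansatz, matching \eqref{dec}, is to look for $M(T):= \sup_{t\le T}(1+t)^{\mu}\big(\|y(t)\|_{L^\infty}+\|z(t)\|_{L^\infty}\big)$ and show that if $M(T)\le 2C_0\varepsilon$ on $[0,T)$ for a suitable constant $C_0$, then in fact $M(T)\le C_0\varepsilon$, provided $\varepsilon_0$ is small. The mechanism: in the Riccati equation $\partial_+ z + \frac{\lambda}{(1+t)^\mu} z = a(u) z^2 + (\text{coupling})$, the integrating factor is $\exp\big(\int_0^t \frac{\lambda}{(1+\tau)^\mu}\,d\tau\big)$. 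For $0\le\mu<1$ this factor grows like $\exp\big(\frac{\lambda}{1-\mu}(1+t)^{1-\mu}\big)$, super-polynomially, which easily absorbs the quadratic term $a(u)z^2$ (of size $\varepsilon^2(1+t)^{-2\mu}$) and yields decay of $z$ at least like $(1+t)^{-\mu}$ — indeed the homogeneous decay is far stronger, but we only need the $(1+t)^{-\mu}$ bound, which is dictated by the forcing term coming from $v$ and the coupling term $b\,y$. For $\mu=1$ the integrating factor is $(1+t)^{\lambda}$, and the condition $\lambda>2$ is exactly what makes $\int_0^t (1+\tau)^{-\lambda}\cdot(\text{quadratic in }\varepsilon(1+\tau)^{-1})\,d\tau$ and the forcing integral converge/decay at the rate $(1+t)^{-1}$; this is where the threshold enters. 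The coupling term (the "transversal" derivative, e.g. $y$ appearing in the equation for $z$) is handled by also tracking $v$ and $u-1$, whose smallness and decay come from integrating the damped equation for $v$ directly — $v_t = -\frac{\lambda}{(1+t)^\mu}v + p'(u)u_x$, giving $\|v(t)\|_{L^\infty}\lesssim\varepsilon$ and, with the derivative bounds fed back in, the improved decay.

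Then I would run the bootstrap: assume the a priori bound on $[0,T^*)$, plug it into the characteristic integral representations for $y$ and $z$, use boundedness of $u$ near $1$ (so $p', c, a, b$ are bounded and $u\ge\delta_0/2$, removing the pressure singularity), and the integrating-factor estimates above to recover the strict improvement of the constant; this shows $T^*=\infty$ by Proposition \ref{local} together with the fact (noted in the text, via Lemma \ref{esP}) that only derivative blow-up can end the solution. Finally, translating $y=r_x,z=s_x$ back, and using $r_t = -c\,r_x$-type identities up to the damping contribution (more precisely $r_t + c s_x$-combinations), gives the stated bound \eqref{dec} on all four quantities $r_t,s_t,r_x,s_x$.

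The main obstacle I anticipate is controlling the coupling between the two characteristic families uniformly in $x$ without any decay or limit hypothesis at spatial infinity: in $L^2$-based approaches one integrates by parts and the cross terms are manageable, but here everything is pointwise along characteristics, so the term $b(t)\,y$ in the equation for $z$ must be bounded using the \emph{already-assumed} weighted sup-norm of $y$ — which is fine for closing the estimate but forces careful bookkeeping of the constants so that the quadratic smallness $\varepsilon$ genuinely beats the coupling, and forces the integrating-factor gain to dominate uniformly. The delicate case is $\mu=1$, where the gain is only polynomial $(1+t)^{\lambda}$ and one must verify that $\lambda>2$ (not merely $\lambda>1$) is what the quadratic-plus-forcing structure requires; getting that bookkeeping exactly right is the crux.
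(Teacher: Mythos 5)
Your overall architecture coincides with the paper's: diagonalize via the Riemann invariants, derive Riccati-type equations for $r_x,s_x$ along the two characteristic families, introduce the integrating factor $A(t)=\exp(\int_0^t\frac{\lambda}{2(1+\tau)^{\mu}}d\tau)$ together with the weight $\sqrt{c}$, and close a bootstrap for $\sup_t(1+t)^{\mu}\|(r_x,s_x)(t)\|_{L^\infty}$ using the estimate of Lemma \ref{decA}. However, there is a genuine gap at the one step you wave through: your claim that the transversal coupling term ``must be bounded using the already-assumed weighted sup-norm of $y$ --- which is fine for closing the estimate.'' It is not fine. In the equation \eqref{rs} for $y=A\sqrt{c}\,r_x$ the coupling is $-\frac{\lambda q}{2(1+t)^{\mu}}$ with $q=A\sqrt{c}\,s_x$, i.e.\ the cross term carries \emph{exactly the same coefficient} $\frac{\lambda}{2(1+t)^{\mu}}$ as the damping itself. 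If you insert the bootstrap bound $|s_x|\leq M(1+\tau)^{-\mu}$, you are led to
\[
(1+t)^{\mu}A^{-1}(t)\int_0^t \frac{\lambda}{2}\,\frac{A(\tau)}{(1+\tau)^{2\mu}}\,M\,d\tau ,
\]
and the sharp constant in Lemma \ref{decA} is $\frac{2}{\lambda}(1+o(1))$ for $\mu<1$ and $\frac{2}{\lambda-2}$ for $\mu=1$. The linear-in-$M$ term therefore enters with coefficient $1+o(1)$ (resp.\ $\frac{\lambda}{\lambda-2}>1$), so the inequality $M\leq C_1\varepsilon+(1+o(1))M+C_2M^2$ cannot be closed no matter how small $\varepsilon$ is. There is no constant bookkeeping that rescues this, because the coupling and the damping are literally the same term split between $r$ and $s$.

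The missing idea is the Lax-type observation \eqref{s-1}: along a minus characteristic one has $\partial_- u=s_x$, hence $\sqrt{c}\,s_x(t,x_-(t))=\frac{d}{dt}\theta_\gamma(u(t,x_-(t)))$ is a \emph{total derivative} of a function of $u$ alone. Integrating the coupling term by parts as in \eqref{y-int} converts $\int_0^t\frac{q}{(1+\tau)^{\mu}}d\tau$ into boundary terms and integrals weighted by $\theta_\gamma(u)$, which is $O(\varepsilon)$ pointwise by Lemma \ref{esP} --- no derivative bound is consumed. After this the cross term contributes only $O(\varepsilon)$ to the bootstrap, yielding $Y(T)\leq C_1\varepsilon+C_2Y(T)^2$ as in \eqref{eq1}, which does close. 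Two smaller points: the paper needs only $\|u-1\|_{L^\infty}+\|v\|_{L^\infty}\leq C\varepsilon$ from Lemma \ref{esP}, not any time decay of $v$ (which is not available without decay of the data at spatial infinity); and the threshold $\lambda>2$ for $\mu=1$ enters through the convergence of $\int_0^t(1+\tau)^{\lambda/2-2}d\tau/(1+t)^{\lambda/2-1}$ in Lemma \ref{decA}, consistent with your heuristic.
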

As we discuss in Remark \ref{decr}, the time decay rate of $L^\infty$-norm of the time and space derivatives of the global solution of \eqref{de0} with $\lambda=0$ is $-1$.
 Therefore, one can say that the time  decay rate  \eqref{dec}  is close to that with no damping case, when the decay in the damping coefficient becomes fast.
However, in the case that  the convergence rate of the limit $\lim_{x \rightarrow \pm \infty} (u_0 (x), v_0 (x))$ is sufficiently fast, the diffusion phenomena implies that
the slower the coefficient in the damping term decays,  the faster the solution does. One may expect that if initial data do not converge sufficiently fast to a constant state at spatial infinity,
the solution behaves like that of the equation of hyperbolic-type, not parabolic. 

\begin{Remark}\label{mainb}
Let  $\gamma >1$ and $( \phi,  \psi) \in C^1 _b (\R)$.  Suppose that $\mu >1$ and $\lambda >0$ or $\mu=1$ and $0 \leq \lambda \leq 2$.
In the same way as in the proof of Theorem \ref{maing}, we can shown that there exists a number $\varepsilon _0 >0$ such that if $0< \varepsilon \leq \varepsilon_0 $, then 
\begin{eqnarray} \label{ul}
&  T^* \geq  C\varepsilon^{-1}  \ \ \mbox{for} \  \mu>1 \ \mbox{and} \ \lambda\geq 0,\\
&  T^* C\varepsilon^{-\frac{2}{2-\lambda}}  \ \ \mbox{for} \ \mu=1  \ \mbox{and} \  0\leq \lambda < 2,\\
&  T^* \geq e^{\frac{C}{\varepsilon }}\ \ \mbox{for} \ \mu=1 \ \mbox{and} \ \lambda =2 .  
\end{eqnarray}
The sharp upper and lower estimates of the lifespan are given in the author's paper \cite{s5}.
\end{Remark}

\subsection*{Notations}

For $\Omega \subset \R^n$, $C^1 _b (\Omega)$ is the set of  bounded and continuous functions whose first partial derivatives are also bounded on $\Omega$.
The norm of $C^1 _b (\Omega)$  is $\|f \|_{C^1 _b (\Omega)} = \| f \|_{L^\infty  (\Omega)} + \| (\partial_{x_1} f,\ldots,\partial_{x_n}f) \|_{L^\infty (\Omega)}$.
When $\Omega = \R$, for abbreviation, we denote $\|\cdot  \|_{L^\infty (\Omega)} $    by $ \|\cdot  \|_{L^\infty}$.

\section{Preliminary}

First, we introduce some useful identities for the Riemann invariant, which are based on Lax's formulas in \cite{lax}.
For $c=\sqrt{-p' (u)}$, the plus and minus characteristic curves are solutions to the following deferential equations:
\begin{eqnarray*}
\frac{d x_{\pm}}{dt}(t) = \pm  c(u(t,x_{\pm} (t))).
\end{eqnarray*} 
For the solution $(u,v)$ constructed in Proposition \ref{local}, $c(u(t,x))$ is continuous on $[0,T^*) \times \R$.
Furthermore, for arbitrarily fixed   $T' \in (0, T^*)$, $c(u(t,x))$ is Lipschitz continuous with $x$ for all $t \in [0,T']$,
since $u_x$ and $c'(u)$ are uniformly bounded on  $ \R \times [0,T']$ from the definition of the lifespan $T^*$.
Therefore, if we take initial data $x_{\pm} (t^*) =x^*  \in \R $ with $t^* \in [0,T^*)$, the solution $x_{\pm} (t)$ 
uniquely exists on $[0,T^* )$ from a standard theorem for the existence and uniqueness for ordinary differential equations.

We can easily check that Riemann invariants $r$ and $s$ (see \eqref{ri} for their definitions) are solutions to the  following 1st order hyperbolic system
\begin{eqnarray}\label{ww}\left\{
\begin{array}{ll} 
\partial_- r =-\dfrac{\lambda}{2(1+t)^\mu}(r+s),\\
 \partial_+ s =-\dfrac{\lambda}{2(1+t)^\mu}(r+s),
\end{array}\right.
\end{eqnarray}
where $\partial_{\pm} = \partial_t \pm c \partial_x$. We put $A(t)=\exp(\int_0 ^t \frac{\lambda}{2(1+\tau)^\mu} d\tau)$.
 These equations can be written as
\begin{eqnarray}\label{ww2}\left\{
\begin{array}{ll} 
\partial_- ( A(t) r) = - \dfrac{\lambda A(t)}{2(1+t)^\mu}s ,\\
\partial_+ (A(t) s) =- \dfrac{\lambda A(t)}{2(1+t)^\mu}r .
\end{array}\right.
\end{eqnarray}
While, differentiating the equations in \eqref{ww} with $x$, from the identity $s_x - r_x = 2 \eta_x =-2c u_x$, we have
\begin{eqnarray}\label{rsx}\left\{
\begin{array}{ll} 
\partial_- r_x =\dfrac{c'}{2c}r_x (s_x -r_x) - \dfrac{\lambda }{2(1+t)^\mu} ( r_x + s_x)\\
 \partial_+  s_x = \dfrac{c'}{2c}r_x (r_x -s_x)- \dfrac{\lambda }{2(1+t)^\mu} ( r_x + s_x).
\end{array}\right.
\end{eqnarray}
Multiplying  the both side of the equations in \eqref{rsx} by $A(t) \sqrt{c}$, since
$$\partial_{-} \sqrt{c}=\frac{c'}{2\sqrt{c}} s_x \ \  \mbox{and} \ \  \partial_{+} \sqrt{c}=\frac{c'}{2\sqrt{c}} r_x ,$$
we have
\begin{eqnarray} \label{rs}\left\{
\begin{array}{ll} 
\partial_- y =-A(t)^{-1} \dfrac{\gamma +1}{4}u^{\frac{\gamma -3}{4}}y^2  - \dfrac{\lambda q}{2(1+t)^{\mu}},\\
\displaystyle \partial_+ q =-A(t)^{-1}  \dfrac{\gamma +1}{4}u^{\frac{\gamma -3}{4}}q^2  - \dfrac{\lambda y }{2(1+t)^{\mu}},
\end{array}\right.
\end{eqnarray}
where $y= A(t) \sqrt{c} r_x$ and $q= A(t) \sqrt{c} s_x$. 
Now we rewrite \eqref{rs} as  integral equalities.
We define $\theta_\gamma (u)$ as follows:
\begin{eqnarray*} 
\theta_{\gamma} (u) =\left\{ \begin{array}{ll}
\frac{4}{3-\gamma}  u^{\frac{3-\gamma}{4}}- \frac{4}{3-\gamma} \ \ \mbox{for} \ \gamma \not= 3,\\
\log u \ \ \mbox{for} \  \gamma = 3.
\end{array}
\right. 
\end{eqnarray*}
Since it holds that 
\begin{eqnarray}
\sqrt{c}s_x (t,x_{-} (t)) =   \frac{d}{dt}\theta_{\gamma}  (u(t,x_{-} (t))), \label{s-1}
\end{eqnarray}
from integration by parts, we  have
\begin{align}
&\int_0 ^t \dfrac{q(t,x_{-} (\tau))}{(1+\tau)^{\mu}} d\tau \notag \\
=&\int_0 ^t \dfrac{ A(\tau)}{(1+\tau)^{\mu}} \left( \dfrac{\mu}{(1+\tau)} - \dfrac{\lambda }{2(1+\tau)^{\mu}} \right) 
\theta_{\gamma} (\tau,x_{-} (\tau))d\tau \notag \\
& +\left( \dfrac{A(t) \theta_{\gamma}(t,x_{-}(t)) }{(1+t)^{\mu}}- \theta_{\gamma} (0,x_{-} (0))\right). \label{y-int}
\end{align}
From the first equation in \eqref{rs} and \eqref{y-int}, $y$  can be written on the minus characteristic curve through $(t,x)$ as follows:
\begin{align}
y(t,x)  =& y(0,x_{-} (0))\\
& -\int_0 ^t \dfrac{\lambda A(\tau)}{2(1+\tau)^{\mu}} \left( \dfrac{\mu}{(1+\tau)} - \dfrac{\lambda }{2(1+\tau)^{\mu}} \right) \theta_{\gamma} (\tau,x_{-} (\tau))d\tau \notag \\
& - \frac{\lambda}{2} \left( \dfrac{A(t) \theta_{\gamma}(t,x_{-}(t)) }{(1+\tau)^{\mu}}- \theta_{\gamma} (0,x_{-} (0))\right) \notag \\
&-\int_0 ^t A(\tau)^{-1} \dfrac{\gamma +1}{4}u^{\frac{\gamma -3}{4}}y^2 (\tau,x_{-} (\tau)) d\tau. \label{eq1}
\end{align}
In the same way as above, we can obtain the similar identity for $q$.  
Next, we introduce  key inequalities  which control  $u$  and $v$.
\begin{Lemma} \label{esP} 
Let $\gamma >1$, $\mu \geq 0$   and $\lambda \geq 0$. The following estimate holds for $C^1$ solution of \eqref{de0} constructed by Proposition \ref{local}
\begin{eqnarray*}
\|r (t) \|_{L^{\infty}} + \|s (t) \|_{L^{\infty}} \leq  \|r (0) \|_{L^{\infty}} + \|s (0) \|_{L^{\infty}}
\end{eqnarray*}
for $t \in [0,T^*)$.
\end{Lemma}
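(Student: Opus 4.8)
The plan is to work with the weighted formulation \eqref{ww2} rather than \eqref{ww} directly, exploiting its key structural feature: $\partial_-(A(t)r)$ depends only on $s$, and $\partial_+(A(t)s)$ only on $r$. I would fix an arbitrary $T'\in(0,T^*)$ and recall, as in the paragraph preceding \eqref{ww}, that through any $(t,x)$ with $t\in[0,T']$ the backward minus- and plus-characteristics $x_-(\cdot),x_+(\cdot)$ exist on $[0,t]$. Integrating the first equation of \eqref{ww2} along $x_-$ and using $A(0)=1$ gives
\begin{equation*}
A(t)\,r(t,x) = r(0,x_-(0)) - \int_0^t \frac{\lambda A(\tau)}{2(1+\tau)^\mu}\, s(\tau,x_-(\tau))\,d\tau ,
\end{equation*}
together with the analogous identity for $A(t)s(t,x)$ along $x_+$.

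Next I would set $a(\tau)=\|A(\tau)r(\tau)\|_{L^\infty}$ and $b(\tau)=\|A(\tau)s(\tau)\|_{L^\infty}$, which are finite and continuous on $[0,T']$ because $(u,v)\in C^1_b([0,T']\times\R)$ and $A$ is continuous. Bounding $|s(\tau,x_-(\tau))|\le \|s(\tau)\|_{L^\infty}=b(\tau)/A(\tau)$, the factor $A(\tau)$ cancels in the integrand, and taking the supremum over $x$ gives
\begin{equation*}
a(t)\le \|r(0)\|_{L^\infty} + \int_0^t \frac{\lambda}{2(1+\tau)^\mu}\,b(\tau)\,d\tau, \qquad b(t)\le \|s(0)\|_{L^\infty} + \int_0^t \frac{\lambda}{2(1+\tau)^\mu}\,a(\tau)\,d\tau .
\end{equation*}
Adding these and applying Gronwall's lemma to $a+b$ yields
\begin{equation*}
a(t)+b(t)\le \bigl(\|r(0)\|_{L^\infty}+\|s(0)\|_{L^\infty}\bigr)\exp\!\Bigl(\int_0^t \tfrac{\lambda}{2(1+\tau)^\mu}\,d\tau\Bigr) = \bigl(\|r(0)\|_{L^\infty}+\|s(0)\|_{L^\infty}\bigr)\,A(t),
\end{equation*}
the last step being exactly the definition of $A(t)$. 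Since $a(t)+b(t)=A(t)\bigl(\|r(t)\|_{L^\infty}+\|s(t)\|_{L^\infty}\bigr)$ and $A(t)>0$, dividing by $A(t)$ gives the asserted bound on $[0,T']$, and letting $T'\uparrow T^*$ finishes the argument.

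I do not expect a genuine obstacle here; the one point worth flagging is that a naive Gronwall argument applied to the unweighted system \eqref{ww} would produce the constant $\exp\bigl(\int_0^t \lambda(1+\tau)^{-\mu}\,d\tau\bigr)=A(t)^2$ and hence a growing bound, which is useless. The role of the weight $A(t)$ is precisely to remove the self-coupling of $r$ in the first equation of \eqref{ww2}, so that the Gronwall factor comes out to be exactly $A(t)$ and cancels the weight, leaving the conservation-type estimate.
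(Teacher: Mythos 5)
Your proposal is correct and follows essentially the same route as the paper: integrating \eqref{ww2} along the characteristics, taking $L^\infty$ norms of the weighted quantities (your $a(t)+b(t)$ is exactly the paper's $A(t)\Phi(t)$), and applying Gronwall so that the factor $\exp\bigl(\int_0^t \tfrac{\lambda}{2(1+\tau)^\mu}\,d\tau\bigr)=A(t)$ cancels the weight. The only difference is cosmetic bookkeeping of the two norms separately before summing.
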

\begin{proof}
The proof of this lemma is almost same as in Lemma 7 in \cite{s5}.
We consider the characteristic curves $x_{-}(\cdot )$ and $x_+ (\cdot )$ through $(t, x)$.
Namely, we solve the characteristic equations with initial data $x_{\pm}(t)=x.$
As mentioned above, the characteristic curves exist on $[0,t]$, since it holds that $u, v \in C^1 _b ([0,T^*) \times \R)$  and that  $p'(u)$ is bounded on $[0,t] \times \R$  from Proposition \ref{local}.
From \eqref{ww2},  $r$ and $s$ can be written by
\begin{align*}
A(t) r(t,x) =&r(0,x_{-} (0))-\int_0 ^t  \dfrac{\lambda A(\tau) s(\tau , x_{-}(\tau)) }{2(1+\tau)^{\mu}} d \tau  \\
A(t) s(t,x) =&s(0,x_{+} (0))-\int_0 ^t  \dfrac{\lambda A(\tau) r(\tau , x_{+}(\tau)) }{2(1+\tau)^{\mu}}  d \tau. 
\end{align*}
We put $\Phi (t) = \|r(t)\|_{L^\infty}+ \|s(t)\|_{L^\infty}$. 
Summing up the above equations and taking $L^{\infty}$-norm, we have that 
\begin{align} \label{4es-2}
A(t) \Phi(t)  \leq & \Phi(0) + \int_0 ^t  \dfrac{\lambda A(\tau) \Phi(\tau)  }{2(1+\tau)^{\mu}} d \tau.
\end{align}
By the Gronwall inequality, we have
\begin{eqnarray*}
\Phi(t)  \leq  \Phi(0) .
\end{eqnarray*}
Then we have the desired estimate.
\end{proof}
The following fundamental inequality plays an important role in the proof of the decay estimate \eqref{dec}.
\begin{Lemma} \label{decA}
Suppose that $0 \leq \mu <1$ and $\lambda >0$ or $\mu=1$ and $\lambda >2$. Then it holds that
\begin{eqnarray*}
A^{-1}(t)\int_0 ^t \frac{A(s)}{(1+s)^{2\mu}} ds \leq C (1+t)^{-\mu}.
\end{eqnarray*}
\end{Lemma}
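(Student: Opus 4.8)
The plan is to estimate $A^{-1}(t)\int_0^t \frac{A(s)}{(1+s)^{2\mu}}\,ds$ by splitting into the two regimes $0\le\mu<1$ and $\mu=1$, in each case exploiting the explicit form of $A(t)=\exp\!\big(\int_0^t \frac{\lambda}{2(1+\tau)^\mu}\,d\tau\big)$. Observe first that $A(t)$ is increasing, $A'(t)=\frac{\lambda}{2(1+t)^\mu}A(t)$, and in the subcritical case $\int_0^t\frac{\lambda}{2(1+\tau)^\mu}\,d\tau=\frac{\lambda}{2(1-\mu)}\big((1+t)^{1-\mu}-1\big)$, so $A(t)$ grows like $\exp\!\big(\frac{\lambda}{2(1-\mu)}(1+t)^{1-\mu}\big)$; in the critical case $\mu=1$ we have $A(t)=(1+t)^{\lambda/2}$ exactly.

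For $0\le\mu<1$: the natural idea is to write $\frac{A(s)}{(1+s)^{2\mu}}=\frac{2}{\lambda}(1+s)^{-\mu}A'(s)$ and integrate by parts, which gives
\begin{eqnarray*}
\int_0^t \frac{A(s)}{(1+s)^{2\mu}}\,ds = \frac{2}{\lambda}(1+t)^{-\mu}A(t) - \frac{2}{\lambda} - \frac{2\mu}{\lambda}\int_0^t (1+s)^{-\mu-1}A(s)\,ds.
\end{eqnarray*}
The last integral is nonnegative, so we immediately get the clean bound $\int_0^t \frac{A(s)}{(1+s)^{2\mu}}\,ds \le \frac{2}{\lambda}(1+t)^{-\mu}A(t)$, hence $A^{-1}(t)\int_0^t\frac{A(s)}{(1+s)^{2\mu}}\,ds\le\frac{2}{\lambda}(1+t)^{-\mu}$, which is exactly the claim (with $C=2/\lambda$). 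This argument is short and uses only $\lambda>0$ and monotonicity of $A$; the condition $\mu<1$ enters only through finiteness of $A(t)$ and the fact that $\frac{d}{ds}[\text{(powers)}\cdot A]$ behaves as expected, so no delicate analysis is needed here.

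For $\mu=1$, $\lambda>2$: here $A(s)=(1+s)^{\lambda/2}$ and $\frac{A(s)}{(1+s)^{2}} = (1+s)^{\lambda/2-2}$, which integrates to $\frac{1}{\lambda/2-1}\big((1+t)^{\lambda/2-1}-1\big)$ since $\lambda/2-1>0$. Multiplying by $A^{-1}(t)=(1+t)^{-\lambda/2}$ gives $\frac{1}{\lambda/2-1}\big((1+t)^{-1}-(1+t)^{-\lambda/2}\big)\le\frac{1}{\lambda/2-1}(1+t)^{-1}$, which is the desired bound with $\mu=1$ and $C=\frac{2}{\lambda-2}$. I do not anticipate a genuine obstacle: the only point requiring care is that the integration-by-parts bound in the subcritical case must discard the correct (nonnegative) term, and that the critical exponent $\lambda>2$ is exactly what makes the exponent $\lambda/2-2>-1$ so the primitive grows like a positive power rather than being logarithmic or decaying — this is precisely the threshold appearing in the hypothesis. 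One could alternatively treat both cases uniformly by the integration-by-parts identity above (valid for all $\mu\le 1$ once $\lambda>0$, and for $\mu=1$ one still needs $\lambda>2$ to control the remainder term's sign or size), but splitting is cleaner for exposition.
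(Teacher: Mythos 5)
Your critical case $\mu=1$, $\lambda>2$ is correct and coincides with the paper's ``straightforward computation.'' The subcritical case, however, rests on a sign error that eliminates the only delicate step of the proof. Since
$\frac{d}{ds}\bigl[(1+s)^{-\mu}A(s)\bigr]=(1+s)^{-\mu}A'(s)-\mu(1+s)^{-\mu-1}A(s)$,
the integration by parts yields
\begin{eqnarray*}
\int_0^t\frac{A(s)}{(1+s)^{2\mu}}\,ds=\frac{2}{\lambda}\Bigl((1+t)^{-\mu}A(t)-1\Bigr)\;+\;\frac{2\mu}{\lambda}\int_0^t\frac{A(s)}{(1+s)^{\mu+1}}\,ds,
\end{eqnarray*}
with a \emph{plus} sign on the remainder, not the minus sign you wrote. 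The remainder is nonnegative and is \emph{added}, so it cannot be discarded. Worse, for $0<\mu<1$ the integral $\int_0^t(1+s)^{-\mu-1}A(s)\,ds$ tends to $+\infty$ as $t\to\infty$ (the integrand itself diverges, since $A$ grows like $\exp(c(1+s)^{1-\mu})$), so your claimed ``clean bound'' $\int_0^t\frac{A(s)}{(1+s)^{2\mu}}\,ds\le\frac{2}{\lambda}(1+t)^{-\mu}A(t)$ is actually false for large $t$. (It happens to be harmless only at $\mu=0$, where the remainder carries the factor $\mu$ and vanishes.)

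The missing content is precisely the estimate $A^{-1}(t)\int_0^t\frac{A(s)}{(1+s)^{\mu+1}}\,ds\le C(1+t)^{-\mu}$, which is what the paper proves after the same integration by parts. It splits the integral at $t/2$: on $[0,t/2]$ one bounds $A(s)\le A(t/2)$, integrates $(1+s)^{-\mu-1}$ to a constant, and uses $A(t/2)/A(t)\le C_1\exp(-C_2(1+t)^{1-\mu})$; on $[t/2,t]$ one bounds $A(s)\le A(t)$ and computes $\int_{t/2}^t(1+s)^{-\mu-1}\,ds\le C(1+t)^{-\mu}$. Your overall strategy --- integrating by parts against $A'=\frac{\lambda}{2}(1+t)^{-\mu}A$ --- is the paper's, but without the correct sign and the subsequent control of the remainder the argument does not close.
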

\begin{proof}
In the case that $\mu=1$, $A(t)=(1+t)^{\lambda/2}$, from which, straightforward computation yields the desired estimate.
In the case that $\mu <1$, since 
$$
\frac{2(1+t)^{\mu}}{\lambda} \dfrac{d}{dt}A(t) =A(t), 
$$
from the integration by parts, we have that
\begin{align*}
 A^{-1}(t) \int_0 ^t \frac{A(s)}{(1+s)^{2\mu}} ds = & \frac{2}{\lambda}\left( \frac{1}{(1+t)^{\mu}} -A^{-1} (t) \right) \\
 & + \frac{2\mu}{\lambda} A^{-1} (t) \int_0 ^t \frac{A(s)}{(1+s)^{\mu +1}} ds. 
\end{align*}
For the third term in the right hand side of the above equality, we see that
\begin{align*}
 A^{-1} (t) \int_0 ^t \frac{A(s)}{(1+s)^{\mu +1}} ds = & A^{-1} (t)\left( \int_0 ^{\frac{t}{2}} + \int_{\frac{t}{2}} ^t  \right)  \frac{A(s)}{(1+s)^{\mu +1}} ds \\
 \leq & C\left( A^{-1}(t)A\left(\frac{t}{2}\right) + \frac{1}{(1+t)^{\mu}} \right). \\
  \leq & C_1 \left( \exp( -C_2(1+t)^{1-\mu})+ \frac{1}{(1+t)^{\mu}} \right). 
\end{align*}
Hence we have the desired estimate.
\end{proof}
\section{Proof of Theorem \ref{maing}}

From Lemma \ref{esP}, we have that  
$$
\|v (t)\|_{L^\infty} + \|u^{-(\gamma-1)/2} (t)-1\|_{L^\infty} \leq C\varepsilon .
$$
Hence we see that
$$
 (1+C\varepsilon )^{-\frac{2}{\gamma-1}} \leq u(t,x) \leq (1-C\varepsilon )^{-\frac{2}{\gamma-1}},
$$
which implies $\|u(t)-1\|_{L^\infty} \leq C \varepsilon ,$ if $\varepsilon>0$ is sufficiently small. 
Applying the above estimates to \eqref{eq1}, we have that
\begin{align*}
|y(t,x)| \leq & C\left( \varepsilon +  \int_0 ^t \frac{\varepsilon A(s)}{(1+s)^{2\mu}} ds \right. \\
 & \left. + \frac{ \varepsilon A(t)}{(1+t)^{\mu}} +  \int_0 ^t A^{-1}(s) y^2 (t,x_{-} (s)) ds \right)
\end{align*}
Multiplying the both side of the above inequality by $(1+t)^\mu /A(t)$ and putting $Y(T)=\sup_{[0,T]} (1+t)^{\mu} \|r_x (t) \|_{L^\infty} $, from the definition of $y$ and Lemma \ref{decA}, we see that
\begin{eqnarray*}
Y(T)\leq C_1 \varepsilon + C_2  Y^2 (T). 
\end{eqnarray*}
Since $Y(0) \leq C_3 \varepsilon $, we have the uniform estimate $Y(T) \leq 2(C_1 + C_3)\varepsilon $ for $T \in [0,T^*)$, if $\varepsilon $ is sufficiently small.
Similarly we can obtain the same uniform estimate of $s_x$ as for $r_x$, which implies the global existence ($T^* =\infty$) and the decay estimate
$$
\|(r_x, s_x) (t)\|_{L^\infty} \leq C \varepsilon (1+t)^{-\mu}.
$$
The decay estimate for $(r_t, s_t)$ can be shown by \eqref{rsx} and Lemma \ref{esP}.

\begin{Remark} \label{decr}
Here we discuss the global existence and a decay estimate of solutions of \eqref{de0} with $\lambda=0$.
In order to consider them, we review some formula of $r_x$ and $s_x$ in the case that $\lambda=0$ (cf. \cite{lax}).
Now we note that $A(t) \equiv 1$ if $\lambda=0$. Solving the differential equation \eqref{rs}, we have that
\begin{eqnarray*}
y(t,x_{-}(t))=\dfrac{1}{\dfrac{1}{y(0,x_{-}(0))}+\displaystyle \int_0 ^t\dfrac{\gamma +1}{4}u^{\frac{\gamma -3}{4}} (\tau , x_{-}(\tau))  d\tau}.
\end{eqnarray*}
Therefore we have that if $r_x (0,x), s_x (0,x) \geq 0$ and $\varepsilon $ is suitably small, then the \eqref{de0} has a global $C^1 _b$ solution such that
$$
\|(r_t, s_t)(t)\|_{L^\infty} + \|(s_x, r_x) (t)\|_{L^\infty} \leq C(1+t)^{-1}.
$$
\end{Remark}

\section*{Acknowledgments}
The author would like to thank the referee for carefully reading of the manuscript.
The research is  supported by  Grant-in-Aid for Young Scientists Research (B), No. 16K17631.


\end{document}